\documentclass{article}
\usepackage{amsthm,amsmath,amssymb}

\newtheorem{lemma}{Lemma}
\newtheorem{theorem}{Theorem}

\begin{document}

\title{Foata, Hikita, and the Bulldozer Problem}
\author{Timothy Y. Chow \\
Center for Communications Research---Princeton}

\maketitle


\section{Introduction}

In a remarkable paper, Tatsuyuki Hikita~\cite{hikita}
settled a longstanding $e$-positivity conjecture of Stanley and Stembridge.
Among many other things, he wrote down a certain formula $\varphi_k$,
proving (in Lemma~6) that the $\varphi_k$ sum to one,
thereby defining a probability distribution.
The proof of Lemma~6 is simple,
but it remains surprising that the $\varphi_k$ sum to one
(even in light of the additional insight in
the second version of Hikita's preprint).
In this note, we give a combinatorial interpretation of Hikita's
probability distribution.
The main tool is a certain permutation statistic that we call
the \emph{watershed}.
After seeing an early version of our work,
Darij Grinberg noticed that the permutation statistic was implicit
in a so-called ``bulldozer problem'' that was on the short list
for the 2015 International Mathematics Olympiad.
However, our description of the statistic
appears to be new.

\section{The Permutation Statistic}

We begin by recalling a map~$\Phi$,
often attributed to Foata
but already known to R\'enyi~\cite{renyi},
which takes a permutation~$\pi$ of a set $S$ of integers,
and maps it to a linear ordering $\Phi(\pi)$ of~$S$ as follows.
Write down $\pi$ in disjoint cycle notation, such that in each cycle, the
largest element of the cycle is written first, and the cycles themselves
are arranged in increasing order of that largest element.
Then erase the parentheses to obtain $\Phi(\pi)$.
The key point is that erasing the parentheses does not
destroy information, since the beginnings of the cycles can
be reconstructed as the \emph{records} or \emph{left-to-right maxima}
of~$\Phi(\pi)$. For example, if $S = \{2, 3, 5, 6, 7, 9\}$
and $\pi$, in disjoint cycle notation, is $(5,2) (7) (9,3,6)$,
then $\Phi(\pi) = 5,2,7,9,3,6$, and the beginnings of the cycles,
namely $5$, $7$, and $9$, are precisely the numbers that are
larger than all numbers preceding them.

\begin{theorem}
\label{thm:watershed}
Let $\pi_1,\pi_2,\ldots, \pi_{2n}$ be a sequence of $2n$ distinct integers.
Then there exists a unique $k\in\{0,1,\ldots,n\}$
such that all the cycle lengths of 
\[ \Phi^{-1}(\pi_{2k}, \pi_{2k-1}, \ldots, \pi_2,\pi_1) \qquad\text{and}
\qquad
 \Phi^{-1}(\pi_{2k+1}, \pi_{2k+2}, \ldots, \pi_{2n-1},\pi_{2n}) \]
are even.
\end{theorem}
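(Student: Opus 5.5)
The plan is to first turn the cycle condition into a condition on records, and then induct on $n$ by looking at where the maximum entry sits.

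\emph{Step 1: restate the condition.} For any word $w$, the cycles of $\Phi^{-1}(w)$ are the blocks of $w$ that start at each record (left-to-right maximum) and run up to the next record. So all cycle lengths are even iff $w$ has even length and every record of $w$ sits at an odd position (counting from $1$).

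Apply this to the left word $\pi_{2k},\pi_{2k-1},\ldots,\pi_1$. The entry $\pi_j$ with $j\le 2k$ is a record iff $\pi_j>\pi_i$ for all $j<i\le 2k$. Its position in that word is $2k-j+1$, which is odd iff $j$ is even. This gives condition $L(k)$: every $j\le 2k$ with $\pi_j>\max\{\pi_{j+1},\ldots,\pi_{2k}\}$ is even.

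Similarly, the right word $\pi_{2k+1},\ldots,\pi_{2n}$ has $\pi_j$ at position $j-2k$, and $j-2k$ is odd iff $j$ is odd. This gives condition $R(k)$: every $j>2k$ with $\pi_j>\max\{\pi_{2k+1},\ldots,\pi_{j-1}\}$ is odd. Both words have even length automatically, so the theorem says exactly one $k$ satisfies $L(k)\wedge R(k)$.

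\emph{Step 2: induction on $n$.} The case $n=0$ is trivial, with $k=0$. For $n\ge 1$, let $m$ be the position of the maximum entry $M$. Whichever word contains $M$, it is a record there.

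Suppose $m$ is even. Then $M$ cannot lie in the right word, since $R(k)$ would force $m$ odd. So any valid $k$ has $2k\ge m$. For such $k$, no $\pi_j$ with $j<m$ is a record of the left word, because $M$ lies after it within $\{j+1,\ldots,2k\}$. The record $M$ itself is at an even position, so it is allowed. Hence $L(k)$ reduces to the same condition on positions in $(m,2k]$. $R(k)$ involves only positions $>2k\ge m$.

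So $L(k)\wedge R(k)$ for the original sequence is equivalent to $L(k-m/2)\wedge R(k-m/2)$ for the shorter sequence $\pi_{m+1},\ldots,\pi_{2n}$. That sequence has even length $2n-m$, and shifting indices by the even number $m$ preserves parity. By induction there is exactly one valid $k'\in\{0,\ldots,n-m/2\}$, hence exactly one valid $k=k'+m/2$.

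If $m$ is odd, the argument is symmetric. $M$ must lie in the right word, so $2k\le m-1$. No entry after position $m$ is a record of the right word, and $M$ at odd position $m$ is allowed. The problem therefore reduces to the prefix $\pi_1,\ldots,\pi_{m-1}$, of even length, with no shift needed. Induction again gives a unique $k\in\{0,\ldots,(m-1)/2\}$.

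\emph{Main obstacle.} The delicate point is checking that deleting the block through (or from) the maximum leaves the record conditions on the remaining positions literally unchanged. In particular, records are computed relative to the window $\{j+1,\ldots,2k\}$ or $\{2k+1,\ldots,j-1\}$, so we must confirm that these windows behave the same after truncation. We also need the parity bookkeeping to survive the shift; it does because $m$ is even in the first case and there is no shift in the second. Once that is verified, existence and uniqueness both follow from the same induction.
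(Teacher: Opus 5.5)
Your proof is correct, but it takes a different route from the paper's main proof. The paper's proof uses a multi-level ascent/descent algorithm. It labels consecutive pairs $A$ or $D$, sends the initial run of $A$'s left and the terminal run of $D$'s right, replaces each remaining run by its maximum, and recurses. Uniqueness comes from the fact that the maximum of an $A$ at any level sits at an even position. Existence comes from the fact that a $D$ never contains an even-position record.

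You instead do strong induction on the position $m$ of the global maximum $M$. Since $M$ is a record of whichever word contains it, its parity forces it into the left word ($m$ even) or the right word ($m$ odd). After that, everything on the far side of $M$ is invisible to the record conditions, so you may delete $\pi_1,\ldots,\pi_m$ or $\pi_m,\ldots,\pi_{2n}$ and recurse. Your parity bookkeeping is right. The ``delicate point'' you flag is immediate, because the relevant windows $\{j+1,\ldots,2k\}$ and $\{2k+1,\ldots,j-1\}$ lie entirely inside the retained part.

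Your argument is the record-language form of the IMO shortlist bulldozer solution that the paper quotes in its last section. There, a maximum at an odd position is a right bulldozer, and the towns to its right are discarded. The paper mentions this only as an alternative, slightly shorter proof, reached through an informal translation. Your version does it directly and proves existence and uniqueness in one stroke. In exchange, the paper's algorithm gives an explicit description of the watershed via the nested $A/D$ structure. It can find the watershed in a single left-to-right pass, whereas your recursion repeatedly needs the maximum of a shrinking interval.
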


The \emph{watershed} of the sequence is defined to be the value of~$k$
in Theorem~\ref{thm:watershed}.  For example, if the sequence is
$2,6,1,5,4,3$, then $k=2$, because $\Phi^{-1}(5,1,6,2)$ consists of
two 2-cycles and $\Phi^{-1}(4,3)$ consists of a single 2-cycle. But if
$k=0$ then $\Phi^{-1}(2,6,1,5,4,3)$ has a fixed point~$2$; if $k=1$
then $\Phi^{-1}(1,5,4,3)$ has a fixed point~$1$; and if $k=3$ then
$\Phi^{-1}(3,4,5,1,6,2)$ has a fixed point~$3$.

\begin{lemma}
\label{lem:even}
Let $S$ be a set of $2n$ (distinct) integers,
let $\pi$ be a permutation of~$S$,
and let $\rho := \Phi(\pi)$.
Then all cycle lengths of~$\pi$ are even if and only if
no record of~$\rho$ is in an even position
(i.e., $\rho_i$ is a record only if $i$ is odd).
\end{lemma}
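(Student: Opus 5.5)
The plan is to read the cycle lengths of $\pi$ directly off the positions of the records of $\rho=\Phi(\pi)$. Let the records of $\rho$ be at positions $1=i_1<i_2<\cdots<i_m$. Position $1$ is always a record, since $\rho_1$ is larger than the empty set of preceding entries. By the construction of $\Phi$ recalled above, the cycles of $\pi$ are exactly the blocks $\rho_{i_j},\ldots,\rho_{i_{j+1}-1}$, with the convention $i_{m+1}=2n+1$. Hence the $j$-th cycle has length $\ell_j=i_{j+1}-i_j$, and $i_j = 1+\ell_1+\cdots+\ell_{j-1}$. This is the only input needed from the definition of $\Phi$.

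\emph{Forward direction.} Suppose all $\ell_j$ are even. Then every partial sum $\ell_1+\cdots+\ell_{j-1}$ is even, so every $i_j$ is odd. Thus no record sits in an even position.

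\emph{Converse.} Suppose every $i_j$ is odd for $1\le j\le m$. For $j<m$ the length $\ell_j=i_{j+1}-i_j$ is a difference of two odd numbers, hence even. For the last cycle, $\ell_m=2n+1-i_m$ is the difference of two odd numbers and is also even. This last step is where the hypothesis $|S|=2n$ is used. Therefore all cycle lengths are even.

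I expect no real obstacle. The only points needing care are that the records of $\rho$ are exactly the first elements of the cycles (already asserted in the paper's description of $\Phi$), and the boundary case of the final cycle, which depends on $|S|$ being even.
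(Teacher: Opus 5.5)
Your proof is correct and is essentially the paper's argument: both identify the cycle lengths of $\pi$ with the gaps between consecutive record positions of $\rho$. The only differences are that the paper argues the forward direction contrapositively (the $2k-1$ entries preceding an even-position record form whole cycles, so one of them has odd length), and it leaves the final cycle implicit, which you handle explicitly with the convention $i_{m+1}=2n+1$ and the hypothesis $|S|=2n$.
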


\begin{proof}
Suppose $\rho_{2k}$ is a record for some $k\ge1$.
Then the cycles of~$\pi$ comprising the $2k-1$ numbers preceding $\rho_{2k}$
would have to include an odd-length cycle.
Conversely, suppose that
all the records are $\rho_{2k-1}$ for some~$k\ge1$.
Then the distances between successive records---i.e.,
the cycle lengths of~$\pi$---must all be even.
\end{proof}

\begin{proof}[Proof of Theorem~\ref{thm:watershed}]
Let $S$ be a set of $2n$ integers, and construct
a linear ordering $\pi$ of~$S$ as follows.
\begin{itemize}
\item Choose an integer~$k$ in the range $0\le k\le n$.
\item Partition $S$ into two disjoint sets $X$ and~$Y$
such that $|X| = 2k$ and $|Y| = 2n-2k$.
\item Choose a permutation $\pi^X$ (respectively, $\pi^Y$)
of~$X$ (respectively, of~$Y$) whose cycle lengths are all even.
\item Write down $\Phi(\pi^X)$ in \emph{reverse}, followed by $\Phi(\pi^Y)$.
\end{itemize}

For example, suppose $n = 5$ and we choose $k = 2$.
Suppose further that we choose $X = \{2, 5, 6, 7\}$
and $Y = \{1, 3, 4, 8, 9, 10\}$,
and $\pi^X = (7, 6, 5, 2)$ and $\pi^Y = (9, 3, 8, 4) (10, 1)$.
Then $\Phi(\pi^X) = 7, 6, 5, 2$ and $\Phi(\pi^Y) = 9, 3, 8, 4, 10, 1$,
so the linear ordering is $2, 5, 6, 7, 9, 3, 8, 4, 10, 1$.

We now show that for every linear ordering~$\pi$ of~$S$,
there exists a unique choice of $k$, $X$, $Y$, $\pi^X$, and~$\pi^Y$
that produces~$\pi$.  This will prove the theorem.

We describe an algorithm that takes as input a linear ordering $\pi$ of~$S$,
and divides it into a \emph{left part}
$\pi_1, \ldots, \pi_{2k}$ and a \emph{right part} $\pi_{2k+1}, \ldots, \pi_{2n}$,
where $k$ is a number computed by the algorithm.
Decisions about which terms are assigned to 
the left or the right are made in stages.
The claim will be that the value of~$k$ produced by the algorithm
coincides with the value of~$k$ in the statement of the theorem.
Begin by representing each adjacent pair $\pi_{2i-1}, \pi_{2i}$
with either the letter~$A$ (for \emph{ascent}), if $\pi_{2i-1} < \pi_{2i}$,
or the letter~$D$ (for \emph{descent}), if $\pi_{2i-1} > \pi_{2i}$.
Here is an example with $n=10$.
\[\pi = \underbrace{12, 20}_A, \underbrace{7, 15}_A, \underbrace{13,11}_D,
  \underbrace{3,9}_A, \underbrace{14,5}_D, \underbrace{16,10}_D,
  \underbrace{2,19}_A, \underbrace{18,4}_D, \underbrace{1,8}_A,
  \underbrace{6,17}_A\]
The resulting sequence of $A$'s and $D$'s---which we call
\emph{first-level} $A$'s and $D$'s, for reasons that will
become clear soon---decomposes naturally into
``runs'' of consecutive~$A$'s
alternating with ``runs'' of consecutive~$D$'s.
The initial run of $A$'s (if any) is assigned to the left,
and the terminal run of $D$'s (if any) is assigned to the right.
In our example, this means that $12, 20, 7, 15$ are assigned to the left.

In the next step of the algorithm,
each run of first-level $A$'s or $D$'s---excluding
the initial run of~$A$'s and the terminal run of~$D$'s,
whose fate has already been determined---is represented
by the \emph{largest $\pi_i$ in that run}. In our example, we
obtain the following sequence.
$$ 13, 9, 16, 19, 18, 17$$
(For example, the number $16$ is the largest number in
the run of two consecutive first-level $D$'s near the middle, $14,5, 16, 10$.)
This sequence has an even number of terms, because we necessarily
start with a run of~$D$'s and end with a run of~$A$'s.
We now apply the algorithm recursively,
assigning \emph{second-level} $A$'s and~$D$'s to pairs of consecutive terms.
In our example, we obtain
$$\underbrace{13, 9}_D, \underbrace{16, 19}_A, \underbrace{18, 17}_D$$
Again, the initial run of second-level~$A$'s (if any) is assigned to the left,
and the terminal run of second-level~$D$'s (if any) is assigned to the right.
In our example, this means that $18,17$ are assigned to the right,
but remember that $18,17$ here represent the terms $18,4,1,8,6,17$
in the original linear ordering. We recurse again; in our example,
we obtain the sequence $13, 19$, which is a \emph{third-level}~$A$,
and therefore is assigned to the left.
The final result is that $k=7$ and the left-right split is
\[\underbrace{12, 20, 7, 15, 13, 11, 3, 9, 14, 5, 16, 10, 2, 19}_{\hbox{left}},
\underbrace{18, 4, 1, 8, 6, 17}_{\hbox{right}}\]

Let us make a few observations.
At every level~$\ell$, we always strip off any initial run of~$A$'s,
so an $(\ell+1)$st-level $A$ or~$D$ always represents a
run of $\ell$th-level~$D$'s followed by a run of
$\ell$th-level~$A$'s.
Moreover, any $A$ or~$D$ at any level always starts
with a $\pi_i$ with~$i$ odd, so if $j$ is odd (respectively, even)
then the $j$th term of an $A$ or a~$D$ is also at an
odd (respectively, even) position of~$\pi$ overall,
as well as of any lower-level $A$ or~$D$ containing that term.

We claim that the subsequence of~$\pi$ represented by an~$A$---at
any level~$\ell$---has the property that the maximum value is
located at an even position of~$A$. This is certainly true
when $\ell=1$, and for larger~$\ell$, the maximum value must
be contained in one of the $(\ell-1)$st-level~$A$'s
inside our $\ell$th-level~$A$,
so the claim follows by induction on~$\ell$.

We address the uniqueness part of the theorem first.
Let us refer to the $\Phi(\pi^Y)$ portion of~$\pi$ as
a $Y$~section (we say \emph{a} $Y$~section and not
\emph{the} $Y$~section in order to avoid presupposing uniqueness),
and the reversed $\Phi(\pi^X)$ portion of~$\pi$ as an $X$~section.
It follows from the previous paragraph, together with Lemma~\ref{lem:even},
that a $Y$~section cannot begin with an~$A$ of any level,
since otherwise the even position of the maximum element of that~$A$
would force $\pi^Y$ to have an odd-length cycle.
Similarly, an $X$~section cannot end with a~$D$ of any level.
Therefore, initial runs of~$A$'s and terminal runs of~$D$'s, at any level,
cannot contain an $XY$ boundary.
That is, the $k$ that the algorithm finds is
the only possible candidate for the $k$ asserted by the theorem.

As for existence, at the end of the algorithm,
the $2n-2k$ terms in the right part comprise
a run of~$D$'s at some level, followed by a run of~$D$'s
at some lower level, followed by a run of~$D$'s at some
still lower level, and so on.
We claim that a $D$---at any level~$\ell$---cannot contain
a $\pi_i$ at an even position~$i$ that is larger than any
preceding $\pi_j$ in that~$D$; from this claim, and Lemma~\ref{lem:even},
it follows that the terms in the right part do indeed
equal $\Phi(\pi^Y)$ for some $\pi^Y$ whose cycle lengths are all even.
The claim is certainly true for $\ell=1$,
and for larger~$\ell$, any $\pi_i$ that is larger than any
preceding $\pi_j$ in that~$D$ must live in the run of
$(\ell-1)$st-level~$D$'s (otherwise we would not have an
$\ell$th-level~$D$ in the first place), and we can rule
out that possibility by induction on~$\ell$.
By symmetry, the same kind of argument proves that
the $2k$ terms in the left part equal the reversal
of $\Phi(\pi^X)$ for some $\pi^X$ whose cycles lengths are all even.
\end{proof}

The number of permutations on $2k$ elements whose cycle
lengths are all even is $(2k-1)!!^2$ (sequence A001818
in the Online Encyclopedia of Integer Sequences).
The proof of Theorem~\ref{thm:watershed} shows that
if $S$ is a set of $2n$ distinct integers,
then the number of linear orderings of $S$ with watershed~$k$ is
\[ \binom{2n}{2k} (2k-1)!!^2 (2n-2k-1)!!^2.\]
Now, it is known~\cite{bona-mclennan-white,adin-et-al,elizalde}
that the number of permutations of~$S$
in which all cycles have even length equals the number
of permutations of~$S$
in which all cycles have odd length.
Since any permutation can be obtained by applying a permutation
whose cycles all have even length to a subset~$X$, and
applying a permutation whose cycles all have odd length
to the complementary subset~$S\backslash X$,
the number of permutations of~$S$
with watershed~$k$ equals the number of permutations of~$S$
whose even-length cycles have total length~$2k$.

\section{Hikita's Transition Probabilities}

The \emph{$q$-analogue} of a positive integer~$n$ is defined by
\[ [n]_q := 1 + q + q^2 + \cdots + q^{n-1}.\]
Here, $q$ is usually thought of as an indeterminate,
but we will think of $q$ as an arbitrary positive real number.
If $q=1$ then $[n]_q = n$.

Given $2n$ positive integers $a_1,b_1,a_2,b_2,\ldots,a_n,b_n$,
Hikita~\cite{hikita} defines a sequence $\varphi_0,\varphi_1,\ldots,\varphi_n$
of what he calls ``transition probabilities'' because they sum to~$1$.
The formula for~$\varphi_k$ is
\[\varphi_k := \prod_{i=1}^k
\frac{ q^{a_i} \left[\displaystyle b_i + \sum_{j=i+1}^k (a_j + b_j)\right]_q}%
  {\left[\displaystyle \sum_{j=i}^k (a_j + b_j)\right]_q }
\prod_{i=k+1}^n
\frac{ \left[\displaystyle a_i + \sum_{j=k+1}^{i-1} (a_j + b_j)\right]_q}%
  {\left[\displaystyle \sum_{j=k+1}^i (a_j + b_j)\right]_q},
\]
where the empty products that arise in the edge cases
$k=0$ and $k=n$ are interpreted as~$1$.
It is not obvious how to attach a combinatorial meaning
to the above formula; indeed, it is not even obvious
that $\sum_{k=0}^n \varphi_k = 1$.
Our goal in this section is to give an interpretation
of~$\varphi_k$ using watersheds.

As a preliminary step, let us describe a process $\mathcal{W}$ that,
given a finite set $S$ of integers
and a sequence of positive real numbers $w_1, w_2, \ldots, w_{|S|}$
that we call \emph{weights},
generates a (non-uniform) random linear ordering~$\pi$ of~$S$.
We begin by randomly choosing $i_1\in\{1,2,\ldots,|S|\}$,
where the probability of choosing a particular number~$i$ is
proportional to~$w_i$.
Then we let $\pi_{i_1}$ be the largest element of~$S$.
Next, we randomly choose $i_2 \in \{1,2,\ldots,|S|\}\setminus \{i_1\}$,
again choosing $i$ with probability proportional to~$w_i$,
and we let $\pi_{i_2}$ be the second largest element of~$S$.
We continue in this fashion until $\pi_i$ has been assigned
a value for all~$i$.

For example, suppose $S = \{1,2,3,4\}$ and the weights are
$7, 2, 4, 7$. We pick $i_1=1$ with probability $7/20$,
$i_1=2$ with probability $2/20$, $i_1 = 3$ with
probability $4/20$, and $i_1 = 4$ with probability $7/20$.
Let us suppose $i_1 =3$.
Then we set $\pi_3 = 4$,
and we pick $i_2 = 1$ with probability $7/16$,
$i_2 = 2$ with probability $2/16$,
and $i_2 = 4$ with probability $7/16$.
Let us suppose $i_2 = 4$.
Then we set $\pi_4 = 3$,
and we pick $i_3 = 1$ with probability $7/9$
and $i_3 = 2$ with probability $2/9$.
Finally, if we suppose that $i_3 = 1$,
then the random linear ordering we end up with is $2, 1, 4, 3$.

\begin{lemma}
\label{lem:processW}
Let $S$ be a finite set and let $w_1, \ldots, w_{|S|}$ be positive reals.
Generate a random linear ordering~$\pi$ according to process~$\mathcal{W}$.
Let $Q \subseteq R \subseteq \{1,2,\ldots,|S|\}$,
and let $i_{\max}$ be the index of the maximum element of $\{\pi_i \mid i \in R\}$.
Then the probability that $i_{\max}\in Q$ is
\[ \frac{\sum_{j\in Q} w_j}{\sum_{j\in R} w_j}. \]
\end{lemma}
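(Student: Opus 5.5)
The plan is to observe that the maximum of $\{\pi_i \mid i\in R\}$ sits at the first index of $R$ chosen by process~$\mathcal{W}$. Process~$\mathcal{W}$ assigns values in decreasing order, so the element placed at the earliest chosen index of $R$ is larger than every element later placed at another index of~$R$. Hence $i_{\max}$ is the first $i_t$ in the sequence $i_1,i_2,\ldots,i_{|S|}$ that lies in~$R$. The lemma therefore reduces to the following claim: the first index of $R$ hit by the weighted sampling-without-replacement sequence is $j\in R$ with probability $w_j/\sum_{r\in R} w_r$. Summing over $j\in Q$ then gives the stated formula.

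To prove the claim, I would condition on the history up to the first time an element of $R$ is chosen. Consider any step~$t$ at which no index of $R$ has yet been chosen, and let $U$ be the set of remaining indices, so $R\subseteq U$. Conditioned on the past and on the event that $i_t\in R$, the probability that $i_t=j$ for a given $j\in R$ is
\[ \frac{w_j/\sum_{u\in U}w_u}{\sum_{r\in R}w_r/\sum_{u\in U}w_u} = \frac{w_j}{\sum_{r\in R} w_r}. \]
This value does not depend on $t$ or on the history. The first hit of $R$ occurs at some step almost surely; in fact it occurs within $|S|-|R|+1$ steps. By the law of total probability over the (disjoint) histories and stopping times, the first index of $R$ chosen is therefore distributed proportionally to the weights on~$R$.

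An alternative, cleaner route is exponential clocks. Give each index $i$ an independent exponential clock of rate $w_i$ and order the indices by ringing time. This ordering has exactly the law of $i_1,i_2,\ldots$ by memorylessness. The first ring among the indices in $R$ is then the minimum of independent exponentials, which is $j$ with probability $w_j/\sum_{r\in R} w_r$. I would present the conditioning argument, since it is elementary. The only point needing care, and the main if modest obstacle, is making the conditioning over the random stopping time rigorous. I would handle this by summing over the finitely many possible prefixes of indices from $U\setminus R$.
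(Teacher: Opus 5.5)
Your proposal is correct and follows essentially the same route as the paper. The paper's proof also identifies $i_{\max}$ with the first index of $R$ chosen by $\mathcal{W}$ and notes that at that moment an index of $Q$ is chosen with the stated probability; your conditioning over the finitely many prefixes from $U\setminus R$ just spells out the step the paper leaves implicit.
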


\begin{proof}
In any run of the process~$\mathcal{W}$,
there exists a unique moment when some element of~$R$ is chosen for the first time,
and $i_{\max}\in Q$ happens if and only if, at this moment,
an element of~$Q$ is chosen. This happens with the stated probability.
\end{proof}

\begin{theorem}
\label{thm:hikita}
Given $2n$ positive integers $a_1,b_1,a_2,b_2,\ldots,a_n,b_n$,
define weights
\begin{align*}
w_1 &= 1 + q + \cdots + q^{a_1 - 1} \\
w_2 &= q^{a_1} + q^{a_1+1} + \cdots + q^{a_1+b_1-1} \\
w_3 &= q^{a_1+b_1} + q^{a_1+b_1+1} + \cdots + q^{a_1+b_1+a_2-1} \\
w_4 &= q^{a_1+b_1+a_2} + q^{a_1+b_1+a_2+1} + \cdots + q^{a_1+b_1+a_2+b_2-1} \\
    & \vdots \\
w_{2n} &= q^{a_1+b_1+\cdots+a_n} + \cdots + q^{a_1+b_1 + \cdots + a_n+b_n-1}
\end{align*}
If a random linear ordering $\pi$ of a set $S$ of $2n$ distinct integers is
generated by process~$\mathcal{W}$, then Hikita's $\varphi_k$ is the probability
that the watershed of~$\pi$ is~$k$.
\end{theorem}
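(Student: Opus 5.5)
The plan is to combine Theorem~\ref{thm:watershed} with Lemma~\ref{lem:even} to turn ``watershed $=k$'' into a family of events about where the maximum of certain index sets lies. Then Lemma~\ref{lem:processW} gives each factor of $\varphi_k$, and an independence argument shows the probabilities multiply.

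\emph{Step 1: the events.} By the uniqueness in Theorem~\ref{thm:watershed}, the watershed equals $k$ exactly when the cycle lengths of both $\Phi^{-1}(\pi_{2k},\ldots,\pi_1)$ and $\Phi^{-1}(\pi_{2k+1},\ldots,\pi_{2n})$ are all even. By Lemma~\ref{lem:even}, the right-hand condition says the sequence $\pi_{2k+1},\ldots,\pi_{2n}$ has no record in an even relative position. Since $2k+1$ is odd, relative parity agrees with absolute parity. So the condition is: for each $i=k+1,\ldots,n$, the maximum of $\{\pi_j : j\in R_i\}$, with $R_i=\{2k+1,\ldots,2i\}$, lies in $Q_i=R_i\setminus\{2i\}$. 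Similarly, reading $\pi_{2k},\ldots,\pi_1$ backwards, an even position of the reversed word is an odd index $2i-1$ of $\pi$. So the left condition is: for each $i=1,\ldots,k$, the maximum over $L_i=\{2i-1,\ldots,2k\}$ lies in $P_i=L_i\setminus\{2i-1\}$.

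\emph{Step 2: the factors.} By Lemma~\ref{lem:processW}, the single event for $R_i$ has probability $w(Q_i)/w(R_i)$. The weights are consecutive blocks of powers of $q$. Hence $w(R_i)=q^{c}\,[\sum_{j=k+1}^{i}(a_j+b_j)]_q$ and $w(Q_i)=q^{c}\,[a_i+\sum_{j=k+1}^{i-1}(a_j+b_j)]_q$ for the same offset $c$, which cancels. This recovers the second product in $\varphi_k$. Likewise $w(P_i)/w(L_i)=q^{a_i}[b_i+\sum_{j>i}^{k}(a_j+b_j)]_q\big/[\sum_{j=i}^k(a_j+b_j)]_q$, since $P_i$ begins after the block of $a_i$ powers belonging to index $2i-1$. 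This recovers the first product.

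\emph{Step 3: independence (the main obstacle).} The events are not obviously independent. I would reformulate process~$\mathcal{W}$ as an exponential race. Attach independent clocks $T_i\sim\mathrm{Exp}(w_i)$ and give the largest element of $S$ to the index with the smallest $T_i$, the next largest to the next smallest, and so on. By memorylessness this has the same law as $\mathcal{W}$. Then the relative order of $\pi$ on any index set depends only on the clocks in that set, so the left events (functions of $T_1,\ldots,T_{2k}$) and the right events (functions of $T_{2k+1},\ldots,T_{2n}$) are independent.

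Within the right part, the sets form a chain $R_{k+1}\subset\cdots\subset R_n$ with $R_{i-1}\subseteq Q_i$. The event for $R_i$ says that $\arg\min_{j\in R_i}T_j\neq 2i$, which is the event $\min_{j\in R_{i-1}\cup\{2i-1\}}T_j<T_{2i}$. Conditionally on this event, the order type of $(T_j)_{j\in R_{i-1}}$ keeps its unconditional law. To see this, note that the minimum of exponentials is independent of which index attains it and of the order among the rest, and $T_{2i}$ is independent of everything in $R_{i-1}$. Inducting downward from $i=n$, the probability of the intersection factors as $\prod_i w(Q_i)/w(R_i)$. The same argument applies to the chain $L_k\subset\cdots\subset L_1$ on the left.

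Multiplying the left and right products gives $\varphi_k$. As a corollary this re-proves $\sum_k\varphi_k=1$, because Theorem~\ref{thm:watershed} says the watershed is always some unique $k$.
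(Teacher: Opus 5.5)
Your proof is correct, and its skeleton is the paper's. Both reduce ``watershed $=k$'' via Lemma~\ref{lem:even} to non-record events at the even positions of each half, read outward from the split. Both then evaluate each event with Lemma~\ref{lem:processW} and finish by arguing independence. The difference lies in that last step, which the paper calls the crucial point.

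The paper stays inside process~$\mathcal{W}$. It conditions $R_j$ on all $L_i$ and on $R_1,\ldots,R_{j-1}$, and notes that $R_j=1$ iff $\pi_{2k+2j}$ is the first entry of its window to be assigned. It then asserts that the earlier $R$'s are determined later in the process, while the $L_i$ concern only the other half. This is short but informal in two places:
\begin{itemize}
\item The timing remark is literally true only on $\{R_j=1\}$. On $\{R_j=0\}$ the first assignment can already settle some $R_{j'}$, so a complement argument is implicitly needed.
\item The independence of the relative orders of the two halves is stated rather than proved, even though their entries are interleaved during $\mathcal{W}$.
\end{itemize}

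Your exponential-race coupling costs an appeal to memorylessness, but it makes both points rigorous. Left/right independence is immediate, because the two families of events are functions of disjoint sets of independent clocks. The chain step reduces to the standard fact that the minimum of independent exponentials is independent of their full ranking. Combined with the independence of $T_{2i}$ from $(T_j)_{j\in Q_i}$ and the nesting $R_{i'}\subseteq R_{i-1}$ for $i'<i$, this gives exactly the factorization. You also carry out explicitly the $q$-bookkeeping that the paper leaves to ``inspecting the factors.''
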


\begin{proof}
For a given value of~$k$, let $L_i$ (for $1\le i\le k$) be a random variable
such that $L_i = 1$ if the $2i^\text{th}$ element of the sequence
\begin{equation}
\label{eq:seq1}
\pi_{2k}, \pi_{2k-1}, \ldots, \pi_2, \pi_1
\end{equation}
is a record (and $L_i = 0$ if it is not).
Similarly, for $1\le j \le n-k$, let $R_j = 1$
if the $2j^\text{th}$ element of the sequence
\begin{equation}
\label{eq:seq2}
\pi_{2k+1}, \pi_{2k+2}, \ldots, \pi_{2n-1}, \pi_{2n}
\end{equation}
is a record (and $R_j = 0$ if not).
With Lemma~\ref{lem:processW} in mind,
we see, by inspecting the factors in the formula, that~$\varphi_k$
equals $\prod_i \Pr(L_i = 0) \prod_j \Pr(R_i=0)$.
Therefore, by Lemma~\ref{lem:even}, the crucial point to check is that
for any fixed~$k$, the random variables $L_i$ and~$R_j$ are independent
(so that the product of their probabilities equals the
probability that all the conditions hold simultaneously).

To prove that a set of random variables is independent,
it suffices to arrange them in some fixed order,
and to show that for each variable,
its distribution remains the same regardless of the 
values of the preceding  variables.
In particular, it suffices to show that if we condition on the values of all the~$L_i$
as well as the values of $R_1, \ldots, R_{j-1}$,
then the probability of $R_j$ remains unaffected.
This is a subtle point, because (for example) knowing that $R_1 = 1$
will decrease the a priori probability that $\pi_{2k+4} > \pi_{2k+2}$.
But consider the first moment in process~$\mathcal{W}$
that one of $\pi_{2k}, \ldots, \pi_{2k+2j}$ is assigned a value;
then $R_j=1$ happens if and only if, at this moment,
$\pi_{2k+2j}$ is the one chosen to be assigned a value,
and the probability that this happens is unaffected by
any of the values of $R_1, \ldots, R_{j-1}$
(which are determined by events that happen later in the process)
or any of the values of the $L_i$
(which only concern the relative sizes of elements in the other half of the linear ordering).
\end{proof}

\section{The Bulldozer Problem}

The following problem was proposed by Estonia
for the 2015 International Mathematics
Olympiad\footnote{See Problem C1 of \texttt{https://www.imo-official.org/problems/IMO2015SL.pdf}}.

\begin{quote}
In Lineland there are $n \ge 1$ towns, arranged along a road running
from left to right.  Each town has a \emph{left bulldozer} (put to
the left of the town and facing left) and a \emph{right bulldozer}
(put to the right of the town and facing right). The sizes of the $2n$
bulldozers are distinct.  Every time when a right and a left bulldozer
confront each other, the larger bulldozer pushes the smaller one off the
road. On the other hand, the bulldozers are quite unprotected at their
rears; so, if a bulldozer reaches the rear-end of another one, the first
one pushes the second one off the road, regardless of their sizes.

Let $A$ and $B$ be two towns, with $B$ being to the right of~$A$.  We say
that town~$A$ can \emph{sweep town~$B$ away} if the right bulldozer of~$A$
can move over to~$B$ pushing off all bulldozers it meets.  Similarly,
$B$ can sweep $A$ away if the left bulldozer of~$B$ can move to~$A$
pushing off all bulldozers of all towns on its way.

Prove that there is exactly one town which cannot be swept away by any
other one.
\end{quote}

Two solutions were provided. For the sake of completeness, we quote
one of them here.

\begin{quote}
Let $T_1, T_2, \ldots, T_n$ be the towns enumerated from left to
right. Observe first that, if town $T_i$ can sweep away town $T_j$,
then $T_i$ also can sweep away every town located between $T_i$ and~$T_j$.

We prove the problem statement by strong induction on~$n$. The base case
$n = 1$ is trivial.  For the induction step, we first observe that the
left bulldozer in $T_1$ and the right bulldozer in $T_n$ are completely
useless, so we may forget them forever. Among the other $2n-2$ bulldozers,
we choose the largest one. Without loss of generality, it is the right
bulldozer of some town $T_k$ with $k<n$.

Surely, with this large bulldozer $T_k$ can sweep away all the towns to
the right of it. Moreover, none of these towns can sweep $T_k$ away; so
they also cannot sweep away any town to the left of~$T_k$. Thus, if we
remove the towns $T_{k+1}, T_{k+2}, \ldots , T_n$, none of the remaining
towns would change its status of being (un)sweepable away by the others.

Applying the induction hypothesis to the remaining towns, we find
a unique town among $T_1, T_2, \ldots , T_k$ which cannot be swept
away. By the above reasons, it is also the unique such town in the
initial situation. Thus the induction step is established.
\end{quote}

As mentioned in this solution, the left bulldozer in $T_1$
and the right bulldozer in $T_n$ are irrelevant,
so the case of $n$ towns corresponds to
a sequence of $2n-2$ distinct integers in Theorem~\ref{thm:watershed}.
It is not difficult to see that the location of the unique town
that cannot be swept away is the same as the location of the watershed
(if a town can be swept away, then the closest bulldozer that sweeps it away
is a record of either $\Phi(\pi^Y)$ or the reverse of $\Phi(\pi^X)$,
and the length of the region between the town and said bulldozer
is odd, so must give rise to an odd-length cycle).

The above argument gives an alternative
(and slightly shorter) proof of Theorem~\ref{thm:watershed},
but the algorithm in our proof of Theorem~\ref{thm:watershed}
has the advantage that it can find the watershed with a
single pass through the sequence, provided that all the relevant
information about all the levels is calculated and saved along the way.

\section{Acknowledgments}

At an early stage of this research, before the role of~$\Phi$
became apparent, Douglas Jungreis (personal communication)
had the crucial insight that
Hikita's probabilities could be interpreted as the probability
of a random permutation in which
the largest values in certain subintervals
satisfied certain inequalities.
In particular, the main observation in the proof of Theorem~\ref{thm:hikita},
that we can simply multiply together probabilities
that at first sight seem to be dependent, is due to Jungreis.

I also want to thank Alex Miller and Fred Kochman for useful conversations.


\begin{thebibliography}{1}

\bibitem{adin-et-al} Ron M. Adin, P\'al Heged\H{u}s, and Yuval Roichman,
Descent set distribution for permutations with cycles of only odd or only even lengths,
arXiv:2502.03507v1, 5 Feb 2025.

\bibitem{bona-mclennan-white} Milk\'os Bona, Andrew McLennan and Dennis White, Permutations with roots,
\textit{Random Structures and Algorithms} \textbf{17} (2000), 157--167.

\bibitem{elizalde} Sergi Elizalde,
A bijection for descent sets of permutations with only even and only odd cycles,
arXiv:2503.09972v2, 7 Apr 2025.

\bibitem{hikita} Tatsuyuki Hikita,
A proof of the Stanley--Stembridge conjecture,
arXiv:2410.12758v1, 16 Oct 2024.

\bibitem{renyi} Alfr\'ed R\'enyi, Th\'eorie des \'el\'ements saillants d???une suite d???observations,
\emph{Ann.\ Fac.\  Sci.\ Univ.\ Clermont-Ferrand} \textbf{8} (1962), 7--13.
\end{thebibliography}
\end{document}